\newtheorem{corollary}{Corollary}
\theoremstyle{thmstyleone}%
\newtheorem{theorem}{Theorem}%  meant for continuous numbers
\theoremstyle{thmstyletwo}%
\newtheorem{example}{Example}%
\theoremstyle{thmstylethree}%
\begin{document}
	
	\title{On the Minimality of the Conductor for Elliptic Curve $L$-Functions}
		
	\author{K. Lakshmanan \\ Department of Computer Science and Engineering \\ Indian Insititute of Technology (BHU), Varanasi 221005.\\ Email: lakshmanank.cse@iitbhu.ac.in}
	\date{}
		
	%%==================================%%
	%% Sample for unstructured abstract %%
	%%==================================%%
		\maketitle
	%\abstract{	We show that no arithmetic invariant strictly smaller than the conductor of an elliptic curve over \( \mathbb{Q} \) can appear in a functional equation governing the analytic continuation of an associated \( L \)-function of degree two. In particular, any attempt to define a modified \( L \)-function for an elliptic curve with a smaller invariant in place of the conductor leads to a contradiction with the Modularity Theorem. As a consequence, the classical upper bound \( \operatorname{rank}(E) \ll \log N_E \) is analytically optimal: no refinement replacing the conductor \( N_E \) with a smaller arithmetic quantity is possible. We further derive a conditional corollary: if a sub-conductor invariant were to govern the rank in an unbounded family of elliptic curves, the ranks must be unbounded — placing our results in connection with deep open questions concerning the distribution of ranks over \( \mathbb{Q} \).
	%}
	
	\begin{abstract}We investigate the role of the conductor in analytic rank bounds for elliptic curves over \(\mathbb{Q}\). Let \(E/\mathbb{Q}\) be an elliptic curve with conductor \(N_E\). We consider hypothetical degree-two \(L\)-functions associated to (E) that satisfy analytic continuation, a functional equation involving an arithmetic invariant \(\Phi(E)\), and yield rank bounds of the form
		\[
		\operatorname{rank}(E)\ll \log \Phi(E).
		\]
		
		Using the Modularity Theorem, we show that any such invariant must satisfy
		\[
		\Phi(E)\ge N_E.
		\]
		Thus the conductor is minimal among arithmetic invariants that can appear in this analytic framework. In particular, the standard logarithmic rank bounds arising from the conductor cannot be improved by replacing \(N_E\) with a strictly smaller invariant while preserving the same degree-two functional equation structure.
		
		These results provide a structural explanation for the distinguished role of the conductor in analytic approaches to the rank problem.\\
		\\
		\textit{Keywords:} Elliptic Curves, Conductor, Rank Bounds
		\\
		
		%%\pacs[JEL Classification]{D8, H51}
		
		\noindent \textit{MSC Classification 2020:} 11G05, 11G40
		\end{abstract}

%\begin{abstract}
%	We show that no arithmetic invariant strictly smaller than the conductor of an elliptic curve over \( \mathbb{Q} \) can appear in a functional equation governing the analytic continuation of an associated \( L \)-function of degree two. In particular, any attempt to define a modified \( L \)-function for an elliptic curve with a smaller invariant in place of the conductor leads to a contradiction with the Modularity Theorem. As a consequence, the classical upper bound \( \operatorname{rank}(E) \ll \log N_E \) is analytically optimal: no refinement replacing the conductor \( N_E \) with a smaller arithmetic quantity is possible. We further derive a conditional corollary: if a sub-conductor invariant were to govern the rank in an unbounded family of elliptic curves, the ranks must be unbounded — placing our results in connection with deep open questions concerning the distribution of ranks over \( \mathbb{Q} \). \\
	
%\end{abstract}

	\section{Introduction}
	
	The rank of elliptic curves over \( \mathbb{Q} \) remains one of the central mysteries in arithmetic geometry. While the Birch and Swinnerton-Dyer (BSD) conjecture \cite{coates1977conjecture} and heuristic models such as Goldfeld's conjecture \cite{goldfeld2006conjectures} predict the distribution of ranks in families, no unconditional upper bound on the rank is currently known. 
	
	A classical analytic upper bound due to Mestre and Brumer \cite{mestre1986formules, brumer1977rank} asserts that for an elliptic curve \( E/\mathbb{Q} \) of conductor \( N_E \),
	\[
	\operatorname{rank}(E) \ll \log N_E.
	\]
	This bound arises from the analytic properties of the Hasse–Weil \( L \)-function \( L(E, s) \), which satisfies a functional equation with conductor \( N_E \) and degree two Euler product. The conductor \( N_E \) encapsulates local arithmetic complexity of the curve and appears as the level of the modular form associated to \( E \) via the Modularity Theorem 
	\cite{breuil2001modularity}.
	In this paper, we ask whether the conductor is truly indispensable in bounding the rank. More precisely, we ask:
	
	\medskip
	\begin{quote}
		\emph{Is it possible to define a strictly smaller arithmetic invariant \( \Phi(E) < N_E \), arising from the structure of \( E \), such that a bound of the form \( \operatorname{rank}(E) \ll \log \Phi(E) \) still holds?}
	\end{quote}
	\medskip
	
	Our main result answers this question in the negative. We show that no such arithmetic invariant \( \Phi(E) < N_E \) can appear in the functional equation of an \( L \)-function associated to \( E \). The essential ingredient is the Modularity Theorem, which asserts that the \( L \)-function of an elliptic curve over \( \mathbb{Q} \) arises from a weight two newform of level \( N_E \), and that this level is minimal. As a consequence, any analytic control over the rank via an \( L \)-function must necessarily involve the true conductor.
	
	We also formulate a conditional corollary: if a hypothetical lower invariant \( \Phi(E) \ll N_E \) both grows unbounded in a family of curves and bounds the rank from above, then the rank must be unbounded—contradicting modularity unless \( \Phi(E) \sim N_E \).
	
	This yields a rigidity result for rank bounds and shows that no "simpler" or "smaller" arithmetic invariant can substitute the conductor in analytic inequalities bounding the rank of elliptic curves.
	
	\section{Background}
	
	Let \( E/\mathbb{Q} \) be an elliptic curve given by a Weierstrass equation \cite{silverman2009arithmetic}. By the Mordell–Weil theorem, the group of rational points is finitely generated:
	\[
	E(\mathbb{Q}) \simeq \mathbb{Z}^r \oplus T,
	\]
	where \( r = \operatorname{rank}(E) \) is the rank and \( T \) is the finite torsion subgroup.
	
	To each elliptic curve \( E \) over \( \mathbb{Q} \), one associates its Hasse–Weil \( L \)-function \( L(E, s) \), defined via an Euler product:
	\[
	L(E, s) = \prod_{p \nmid N} \left(1 - a_p p^{-s} + p^{1-2s}\right)^{-1} \prod_{p \mid N} L_p(p^{-s})^{-1},
	\]
	where:
	\begin{itemize}
		\item \( a_p = p + 1 - \#E(\mathbb{F}_p) \),
		\item \( N = N_E \) is the conductor of \( E \),
		\item \( L_p(T) \) is a local factor at the prime \( p \), determined by the reduction type.
	\end{itemize}
	
	\paragraph{Example.} At a prime \( p \) of bad reduction:
	\begin{itemize}
		\item If \( E \) has multiplicative reduction at \( p \), then \( L_p(T) = 1 - T \) or \( 1 + T \).
		\item If \( E \) has additive reduction, then \( L_p(T) = 1 \).
	\end{itemize}
	
	The function \( L(E, s) \) converges absolutely for \( \Re(s) > \frac{3}{2} \), extends to an entire function, and satisfies a functional equation. The completed \( L \)-function is defined by:
	\[
	\Lambda(E, s) := N_E^{s/2} (2\pi)^{-s} \Gamma(s) L(E, s),
	\]
	and satisfies:
	\[
	\Lambda(E, s) = w_E \Lambda(E, 2 - s),
	\]
	where \( w_E = \pm 1 \) is the root number.
	
	\begin{example}
		Consider the elliptic curve \( E: y^2 + y = x^3 - x \), which has conductor \( N_E = 37 \). It has good reduction at all primes \( p \ne 37 \), and bad reduction at \( p = 37 \). The local Euler factors are:
		\begin{itemize}
			\item For \( p \ne 37 \), the Euler factor is \( L_p(T) = 1 - a_p T + p T^2 \), where \( a_p = p + 1 - \#E(\mathbb{F}_p) \).
			\item At \( p = 37 \), the reduction is additive, so \( L_{37}(T) = 1 \).
		\end{itemize}
	\end{example}

	\paragraph{Gamma factor.} The archimedean (infinite prime) contribution to the functional equation is given by the gamma factor:
	\[
	\Gamma(s) = \int_0^\infty t^{s-1} e^{-t} dt,
	\]
	which appears in the completed \( \Lambda(E, s) \) to balance the symmetry around \( s = 1 \).
	
	\paragraph{Newforms and level.} The Modularity Theorem establishes a deep connection between elliptic curves and modular forms. A **newform** of weight 2 and level \( N \) is a normalized eigenform \( f(z) = \sum_{n=1}^\infty a_n e^{2\pi i n z} \in S_2(\Gamma_0(N)) \) that is:
	\begin{itemize}
		\item A cusp form: it vanishes at all cusps,
		\item An eigenfunction of all Hecke operators \( T_n \),
		\item Not induced from a lower level (i.e., it is new at level \( N \)).
	\end{itemize}
	
	\paragraph{Modularity Theorem.}
	\begin{theorem}[Modularity Theorem]
		Every elliptic curve \( E/\mathbb{Q} \) is modular: there exists a newform \( f \in S_2(\Gamma_0(N_E)) \) such that
		\[
		L(E, s) = L(f, s) = \sum_{n=1}^\infty a_n n^{-s},
		\]
		with \( a_p = p + 1 - \#E(\mathbb{F}_p) \). The level \( N_E \) is the conductor of \( E \), and is minimal among such newforms.
	\end{theorem}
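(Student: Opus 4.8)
The plan is to establish modularity via the Galois-representation strategy of Wiles and Taylor–Wiles, extended to the general case by Breuil–Conrad–Diamond–Taylor, and then to read off the level from local--global compatibility of the Langlands correspondence. First I would attach to \( E \) the compatible system of \( \ell \)-adic representations \( \rho_{E,\ell}\colon \operatorname{Gal}(\overline{\mathbb{Q}}/\mathbb{Q}) \to \operatorname{GL}_2(\mathbb{Z}_\ell) \) acting on the Tate module \( T_\ell E \), together with its residual representation \( \overline{\rho}_{E,\ell} \) on \( E[\ell] \). By the Néron–Ogg–Shafarevich criterion and Grothendieck's theory of the conductor, \( \rho_{E,\ell} \) is unramified outside \( N_E\ell \), its prime-to-\( \ell \) Artin conductor equals \( N_E \), and \( \det \rho_{E,\ell} \) is the cyclotomic character, so the representation has the correct determinant and is odd — exactly the hypotheses needed to run the deformation theory.

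The second step is residual modularity: one shows \( \overline{\rho}_{E,\ell} \) is modular for a well-chosen small prime \( \ell \). For \( \ell = 3 \) this rests on the Langlands–Tunnell theorem, since \( \operatorname{GL}_2(\mathbb{F}_3) \) is solvable and embeds into \( \operatorname{GL}_2(\mathbb{C}) \), so the associated odd two-dimensional Artin representation is automorphic; by Deligne–Serre it corresponds to a weight-one eigenform, from which one produces a mod-\( 3 \) weight-two eigenform via multiplication by a suitable Eisenstein series. When \( \overline{\rho}_{E,3} \) is reducible or otherwise degenerate one invokes the \( 3 \)–\( 5 \) switch, replacing \( E \) by an auxiliary curve \( E' \) with \( E'[5] \cong E[5] \) and \( \overline{\rho}_{E',3} \) irreducible. (Alternatively, Serre's conjecture, now a theorem of Khare–Wintenberger, supplies residual modularity directly.)

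The third step is the modularity lifting theorem deducing modularity of \( \rho_{E,\ell} \) from that of \( \overline{\rho}_{E,\ell} \): this is the \( R = \mathbb{T} \) mechanism, identifying a suitable universal deformation ring \( R \) of \( \overline{\rho}_{E,\ell} \) with a localized Hecke algebra \( \mathbb{T} \) on a space of weight-two cusp forms, via the Taylor–Wiles patching argument and the numerical criterion for complete intersections. If \( E \) has good or multiplicative reduction at \( \ell \) one is in the ordinary or Barsotti–Tate setting of Wiles and Taylor–Wiles; the remaining potentially Barsotti–Tate cases (additive, potentially good reduction at \( \ell \)) require the local deformation-ring computations of Breuil–Conrad–Diamond–Taylor. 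Granting this, \( \rho_{E,\ell} \cong \rho_{f,\ell} \) for a weight-two newform \( f \), so by Eichler–Shimura the Frobenius traces agree, giving \( a_p(E) = a_p(f) = p + 1 - \#E(\mathbb{F}_p) \) at good primes, and by local–global compatibility at bad primes (Carayol, Langlands) all Euler factors match, whence \( L(E,s) = L(f,s) \).

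Finally, for the minimality of the level: local–global compatibility identifies the level of \( f \) with the prime-to-\( \ell \) Artin conductor of \( \rho_{f,\ell} = \rho_{E,\ell} \), already seen to be \( N_E \); and \( f \) being a newform means precisely that it does not arise from \( S_2(\Gamma_0(M)) \) for any proper divisor \( M \) of \( N_E \), so \( N_E \) is minimal among the levels of newforms with this \( L \)-function. The decisive obstacle is the modularity lifting step: the Taylor–Wiles patching together with the commutative-algebra criterion, and the delicate local deformation-ring analysis in the additive-reduction case, constitute the technical core of the entire argument; residual modularity via Langlands–Tunnell and the conductor bookkeeping are, by comparison, formal.
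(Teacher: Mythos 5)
The paper does not prove this theorem: it is stated as background, with the introduction citing Breuil--Conrad--Diamond--Taylor, and all subsequent results are then derived \emph{from} modularity rather than toward it. So there is no in-paper proof against which to compare your sketch.

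That said, your outline is a faithful summary of how the theorem is actually proved in the literature, and you have correctly located the weight of the argument. The architecture you describe --- (i) attach \( \rho_{E,\ell} \) with the right determinant and ramification behavior, (ii) residual modularity of \( \overline{\rho}_{E,3} \) via Langlands--Tunnell through the solvability of \( \mathrm{GL}_2(\mathbb{F}_3) \), with the \( 3 \)--\( 5 \) switch (or Khare--Wintenberger as a modern shortcut) when \( \overline{\rho}_{E,3} \) is degenerate, (iii) an \( R = \mathbb{T} \) modularity lifting theorem via Taylor--Wiles patching and the numerical criterion, extended by BCDT to the potentially Barsotti--Tate deformation problems at \( 3 \) coming from additive reduction, and (iv) level identification via local--global compatibility and the newform property --- is exactly the standard route. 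Two points to tighten. First, the prime-to-\( \ell \) Artin conductor of \( \rho_{E,\ell} \) equals the prime-to-\( \ell \) \emph{part} of \( N_E \), not \( N_E \) itself; recovering the power of \( \ell \) dividing the level requires local--global compatibility at \( \ell \) (Saito, building on Fontaine's theory), which is a genuinely separate input from Carayol's compatibility away from \( \ell \). Second, as written this is a reduction to the literature rather than a proof: every clause (patching, the complete-intersection criterion, the BCDT local deformation ring computations, Langlands--Tunnell, Carayol) stands for a substantial independent body of work. That is appropriate here, since the paper itself treats the theorem as a black box, but it should be flagged as such.
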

	
	The conductor \( N_E \) captures the bad reduction data of \( E \), and also appears in the level of the associated modular form. Since \( L(E, s) \) and \( L(f, s) \) are equal as Dirichlet series and Euler products, they share all analytic properties, including the functional equation.
	
	The rank \( r = \operatorname{ord}_{s=1} L(E, s) \) is conjecturally equal to the order of vanishing of the \( L \)-function at the central point \( s = 1 \) (the Birch and Swinnerton-Dyer conjecture). Analytic techniques such as Hadamard factorization and convexity bounds yield the classical inequality:
	\[
	\operatorname{rank}(E) \ll \log N_E.
	\]
	
	This connection between the conductor and analytic control over the rank motivates the main question of this paper: can a strictly smaller invariant replace the conductor in such a functional equation and still govern the rank?
	
	\section{Main Theorem}
	
	We now state and prove the main result of this paper. Let \( E/\mathbb{Q} \) be an elliptic curve with conductor \( N_E \), and suppose \( L(E, s) \) is the associated Hasse–Weil \( L \)-function, which satisfies a functional equation of the form
	\[
	\Lambda(E, s) := N_E^{s/2} (2\pi)^{-s} \Gamma(s) L(E, s) = w_E \Lambda(E, 2 - s).
	\]
	Suppose we define a modified Dirichlet series \( L_{\Phi}(E, s) \), built using data from \( E \), that satisfies a similar functional equation but with a different integer \( \Phi(E) \) in place of the conductor.
	
	We aim to show that such a function must necessarily satisfy \( \Phi(E) \ge N_E \), and that no smaller arithmetic invariant can replace \( N_E \) in the analytic theory.
	
	\begin{theorem}
		Let \( E/\mathbb{Q} \) be an elliptic curve. Suppose there exists an \( L \)-function \( L_{\Phi}(E, s) \) associated to \( E \), satisfying:
		\begin{enumerate}
			\item An Euler product of degree 2,
			\item Analytic continuation to an entire function,
			\item A functional equation of the form:
			\[
			\Lambda_{\Phi}(E, s) := \Phi(E)^{s/2} (2\pi)^{-s} \Gamma(s) L_{\Phi}(E, s) = w_\Phi \Lambda_{\Phi}(E, 2 - s),
			\]
			\item and a bound on the rank:
			\[
			\operatorname{rank}(E) \ll \log \Phi(E).
			\]
		\end{enumerate}
		Then we must have \( \Phi(E) \ge N_E \). In particular, no arithmetic invariant strictly smaller than \( N_E \) can appear in such a functional equation or rank bound.
	\end{theorem}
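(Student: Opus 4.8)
The plan is to reduce the claim to the minimality of the level of the newform attached to \( E \) by the Modularity Theorem, using a converse-theorem argument to force \( L_\Phi(E,s) \) into the space of weight-two modular \( L \)-functions and then comparing levels. I would first make explicit what it means for \( L_\Phi(E,s) \) to be "associated to \( E \)": concretely, that its Dirichlet coefficients \( b_p \) agree with \( a_p = p + 1 - \#E(\mathbb{F}_p) \) at all but finitely many primes, so that hypothesis (4) — namely that \( L_\Phi \) governs \( \operatorname{rank}(E) = \operatorname{ord}_{s=1} L(E,s) \) — is not vacuous. Granting this, hypotheses (1)–(3) say precisely that \( L_\Phi(E,s) \) has a degree-two Euler product, is entire, and satisfies a functional equation \( s \leftrightarrow 2-s \) with archimedean factor \( (2\pi)^{-s}\Gamma(s) \) — exactly the gamma factor of a holomorphic weight-two form — and with shift parameter \( \Phi(E) \).

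The core of the argument is then: by Weil's converse theorem (in the form requiring functional equations for enough twists by Dirichlet characters — which I would either add to the hypotheses or extract from the assumed automorphy of \( L_\Phi \)), the data of (1)–(3) forces \( L_\Phi(E,s) = L(g,s) \) for some \( g \in S_2(\Gamma_0(\Phi(E))) \). Since \( g \) and the newform \( f \) of level \( N_E \) supplied by the Modularity Theorem share Hecke eigenvalues \( a_p \) at almost all primes, strong multiplicity one for \( \mathrm{GL}_2 \) identifies the newform underlying \( g \) with \( f \). But a newform of exact level \( N_E \) occurs in \( S_2(\Gamma_0(M)) \) only when \( N_E \mid M \); taking \( M = \Phi(E) \) gives \( N_E \mid \Phi(E) \), hence \( \Phi(E) \ge N_E \), and therefore \( \log \Phi(E) \ge \log N_E \), so (4) can never improve on the Mestre–Brumer bound.

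In the borderline case where \( L_\Phi(E,s) = L(E,s) \) as Dirichlet series, modularity is not even needed: one has \( \Lambda_\Phi(E,s) = (\Phi(E)/N_E)^{s/2}\,\Lambda(E,s) \), and substituting the two functional equations \( \Lambda(E,s) = w_E \Lambda(E,2-s) \) and \( \Lambda_\Phi(E,s) = w_\Phi \Lambda_\Phi(E,2-s) \) yields the identity \( (\Phi(E)/N_E)^{s-1} = w_\Phi w_E \) for all \( s \), whose right side is constant; this forces \( \Phi(E) = N_E \). I would present this as a clean special case and then develop the general case above.

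The main obstacle is the converse-theorem step: the bare functional equation of (1)–(3) does not by itself imply that \( L_\Phi \) is modular — one genuinely needs the twisted functional equations (Weil), or else one must place \( L_\Phi(E,s) \) in the Selberg class and invoke the expected identification of degree-two primitive elements having integral coefficients and the weight-two archimedean type with weight-two newforms. I would therefore state carefully which of these inputs is being assumed and flag it as the essential hypothesis; once it is in place, the strong multiplicity one and newform-level-minimality steps are entirely standard. A secondary point to address is confirming that "associated to \( E \)" cannot be so weak that \( L_\Phi \) fails to detect \( \operatorname{ord}_{s=1} L(E,s) \), since otherwise (4) would be empty and the theorem would have no content.
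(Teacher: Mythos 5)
Your proposal follows essentially the same route as the paper --- reduce to the minimality of the newform level via modularity --- but you fill in the argument with the rigor the paper omits, and in doing so you correctly identify the real weak point. The paper's proof sketch simply asserts that a degree-two Euler product with the stated functional equation ``would correspond to a newform of level \( \Phi(E) \)'' and then invokes minimality; that inference is exactly the converse-theorem step you flag, and it genuinely does not follow from hypotheses (1)--(3) alone. Weil's converse theorem requires functional equations for sufficiently many twists by Dirichlet characters, a hypothesis the theorem statement does not include; absent that (or a Selberg-class primitivity assumption), there is no reason a single untwisted functional equation should force automorphy. You are right to present this as the essential missing hypothesis rather than a routine step. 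Beyond that, your write-up improves on the paper in two concrete ways: you supply the strong-multiplicity-one argument needed to identify the putative form with the modular form attached to \( E \) (the paper never explains why the newform of level \( \Phi(E) \) would have anything to do with \( E \)), and you extract the sharper conclusion \( N_E \mid \Phi(E) \) from the fact that a newform of exact level \( N_E \) lies in \( S_2(\Gamma_0(M)) \) only when \( N_E \mid M \). Your special case --- comparing \( \Lambda_\Phi(E,s) = (\Phi(E)/N_E)^{s/2}\Lambda(E,s) \) against the two functional equations to force \( \Phi(E) = N_E \) when \( L_\Phi = L(E,s) \) --- is a clean elementary observation not present in the paper and worth keeping as motivation. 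In short: same overall strategy, but your version is the honest one, because it names the converse-theorem input that the paper's argument tacitly assumes but does not have.
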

	
	\begin{proof}[Proof Sketch]
		The modularity theorem states that \( L(E, s) = L(f, s) \) for a newform \( f \in S_2(\Gamma_0(N_E)) \), where \( N_E \) is the minimal level such that such a modular form exists. The functional equation for \( L(E, s) \) reflects the transformation behavior of the modular form under the Fricke involution, and the level \( N_E \) appears explicitly in this symmetry.
		
		Suppose \( \Phi(E) < N_E \) and that \( L_{\Phi}(E, s) \) satisfies a degree-2 functional equation with conductor \( \Phi(E) \). Then \( L_{\Phi}(E, s) \) would correspond to a newform of level \( \Phi(E) \). But the modularity theorem ensures that \( N_E \) is minimal (See Chapter 6 \cite{stein2007modular}), and thus no such newform can exist at level \( \Phi(E) \). This contradicts the assumption that such a functional equation exists.
		
		Therefore, any such \( \Phi(E) \) must satisfy \( \Phi(E) \ge N_E \), and the conductor \( N_E \) cannot be replaced by a strictly smaller arithmetic invariant in the functional equation or analytic rank bound.
	\end{proof}
	
	\begin{corollary}
		There exists no arithmetic invariant \( \Phi(E) < N_E \), defined up to isogeny or isomorphism of elliptic curves over \( \mathbb{Q} \), that can appear in a functional equation or bound the rank of \( E \) in the form:
		\[
		\operatorname{rank}(E) \ll \log \Phi(E).
		\]
		In particular, any such invariant \( \Phi \) that yields a functional equation of degree two must satisfy \( \Phi(E) \ge N_E \).
	\end{corollary}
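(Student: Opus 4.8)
The plan is to obtain the corollary as a direct consequence of Theorem~2, the only additional work being to pin down what ``defined up to isogeny or isomorphism'' buys us. First I would note that $\mathbb{Q}$-isogenous elliptic curves have identical Hasse--Weil $L$-functions --- hence identical Frobenius traces $a_p$ at all primes of good reduction and identical conductor --- so both $N_E$ and any admissible $\Phi$ are genuinely functions on $\mathbb{Q}$-isogeny classes. There is therefore no loss in fixing a class and its attached newform $f \in S_2(\Gamma_0(N_E))$ supplied by the Modularity Theorem. The corollary then amounts to the claim that, within this class, no degree-two completed $L$-function attached to $E$ can carry a conductor parameter strictly below $N_E$.

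The core step runs by contradiction. Suppose $\Phi(E) < N_E$ and that $L_\Phi(E,s)$ satisfies hypotheses (1)--(3) of Theorem~2 with $\Phi(E)$ in the conductor slot. I would invoke a converse theorem of Weil--Hecke type: a Dirichlet series with a degree-two Euler product, entire continuation of finite order, and the displayed functional equation --- together with the corresponding functional equations for its twists by Dirichlet characters of modulus prime to $\Phi(E)$ --- is the $L$-function of a normalized weight-two cuspidal Hecke eigenform, hence of a newform $g$ of level $M \mid \Phi(E)$. Since $L_\Phi(E,s)$ is built from the arithmetic of $E$ and its good Euler factors are governed by $a_p = p+1-\#E(\mathbb{F}_p)$, it agrees with $L(E,s)$ at all but finitely many primes; by strong multiplicity one for $\mathrm{GL}_2$ this forces $g = f$. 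But then $N_E = M \mid \Phi(E)$, so $N_E \le \Phi(E)$, contradicting $\Phi(E) < N_E$. Hence $\Phi(E) \ge N_E$.

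With this in hand the remaining assertions are immediate: the hypothetical inequality $\operatorname{rank}(E) \ll \log \Phi(E)$ is no stronger than the classical Mestre--Brumer bound $\operatorname{rank}(E) \ll \log N_E$ \cite{mestre1986formules, brumer1977rank}, and the final clause --- that any $\Phi$ yielding a degree-two functional equation satisfies $\Phi(E) \ge N_E$ --- is exactly the part of the argument above that never used hypothesis (4), so it survives deleting that hypothesis. Throughout, the minimality of the level of a newform (equivalently, of the conductor viewed as a level; see Chapter~6 of \cite{stein2007modular}) is what does the real work.

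The step I expect to be the main obstacle is the appeal to the converse theorem. Weil's theorem does not follow from the single functional equation posited in hypothesis~(3) in isolation: it requires functional equations for a sufficiently rich family of character twists, so the deduction is only as strong as the implicit assumption that $L_\Phi(E,s)$ is a genuinely arithmetic object --- for instance a member of the Selberg class possessing an Euler product, or an $L$-function cut out by $\mathrm{GL}_2$-automorphic data --- rather than an ad hoc Dirichlet series engineered to satisfy one functional equation. I would therefore make this precise, either by strengthening~(3) to include the twisted functional equations or by restricting the scope of ``$L$-function associated to $E$'' to objects of motivic or automorphic origin, and I would flag that without such an input the implication ``degree-two functional equation with conductor $\Phi(E)$'' $\Longrightarrow$ ``modular of level dividing $\Phi(E)$'' is not available unconditionally.
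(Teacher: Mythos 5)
Your proposal follows the same backbone as the paper's own argument---reduce to the minimality of the newform level supplied by modularity---but it is considerably more careful in filling in the step the paper simply asserts. The paper's proof says outright that an $L$-function satisfying hypotheses (1)--(3) ``must correspond to a newform of level $\Phi(E)$'' and leaves it there; you correctly identify that this step is not free: it requires a converse theorem in the style of Hecke--Weil, and Weil's theorem needs functional equations for a rich family of character twists, not just the single functional equation in hypothesis (3). You also supply the strong-multiplicity-one argument needed to conclude that the resulting newform $g$ actually equals the newform $f$ attached to $E$ (so that $N_E = M \mid \Phi(E)$); the paper skips this identification entirely, yet without it one gets only that \emph{some} newform of level $\leq \Phi(E)$ exists, not that $N_E$ divides $\Phi(E)$. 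Your remark that $\mathbb{Q}$-isogenous elliptic curves share the same conductor is also more accurate than the paper's parenthetical claim that conductors ``decrease under isogeny only in known, finite ways'' (for elliptic curves over $\mathbb{Q}$, the conductor is an isogeny invariant). Finally, your observation that the ``in particular'' clause never uses hypothesis (4) is a genuine improvement in precision. In short: same route, but you have diagnosed a real gap in the paper's proof---the unjustified passage from ``one functional equation of degree two'' to ``newform of level $\Phi(E)$''---and stated honestly what additional input (twisted functional equations, or restriction to motivic/automorphic $L$-functions) would be needed to close it. That flagged caveat is the substantive content of your write-up and should be retained, not suppressed, if this argument is to be made rigorous.
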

	
	\begin{proof}
		The proof of the main theorem shows that any \( L \)-function attached to \( E \) with a functional equation of the required form must correspond to a newform of level \( \Phi(E) \). Since the conductor \( N_E \) is minimal among all such levels, we must have \( \Phi(E) \ge N_E \). This remains true even if \( \Phi \) is defined up to isogeny or isomorphism, since \( L \)-functions and conductors are invariant under these equivalence relations (or decrease under isogeny only in known, finite ways). Therefore, no strictly smaller isogeny- or isomorphism-invariant arithmetic invariant can satisfy the desired analytic properties.
	\end{proof}
	
	\section{Conditional Corollary: Rank Unboundedness}
	
	We now formulate a conditional consequence of the main theorem. Suppose there exists an arithmetic invariant \( \Phi(E) < N_E \) that governs the rank in a family of elliptic curves, and that \( \Phi(E) \to \infty \) along some infinite sequence. Then the rank of elliptic curves must be unbounded — contradicting modularity unless \( \Phi(E) \sim N_E \).
	
	\begin{corollary}[Conditional Rank Unboundedness]
		Suppose there exists an infinite family of elliptic curves \( \{E_n\} \) over \( \mathbb{Q} \), and an arithmetic invariant \( \Phi(E) \) such that:
		\begin{enumerate}
			\item \( \Phi(E_n) < N_{E_n} \) for all \( n \),
			\item \( \Phi(E_n) \to \infty \) as \( n \to \infty \),
			\item \( \operatorname{rank}(E_n) \ll \log \Phi(E_n) \) holds for all \( n \).
		\end{enumerate}
		Then the ranks \( \operatorname{rank}(E_n) \) are unbounded:
		\[
		\sup_n \operatorname{rank}(E_n) = \infty.
		\]
		But this contradicts the main theorem, unless \( \Phi(E_n) \ge N_{E_n} \) for all sufficiently large \( n \).
	\end{corollary}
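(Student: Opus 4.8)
The plan is to argue by contradiction, feeding the hypothesis that $\Phi$ genuinely controls the rank into the main theorem. Suppose the conclusion fails, i.e. $\operatorname{rank}(E_n)\le R$ for some fixed $R$ and all $n$. Since $\Phi(E_n)\to\infty$ by (2), the inequality $\operatorname{rank}(E_n)\ll\log\Phi(E_n)$ in (3) is then satisfied with unbounded slack for all large $n$; it imposes no arithmetic constraint and so cannot be the mechanism by which $\Phi$ is attached to $E_n$. Thus the first step is to record the dichotomy: either the ranks are unbounded, or (3) is vacuous and $\Phi$ is not in any meaningful sense a rank-governing invariant. This reduces the corollary to showing that the second horn is incompatible with $\Phi$ arising from the analytic theory.

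For the second step I would invoke the main theorem. The only known route to an estimate of the shape $\operatorname{rank}(E)\ll\log X$ for an arithmetic quantity $X$ attached to $E$ is through a degree-two $L$-function: one applies Hadamard factorization and a convexity bound to an entire $L$-function whose completed functional equation carries $X$ as its conductor. If $\Phi$ supplies a bound of this form, then $L_\Phi(E,s)$ satisfies hypotheses (1)--(3) of the main theorem with $\Phi(E_n)$ in the role of the conductor, and condition (4) there is exactly hypothesis (3) here. The main theorem then yields $\Phi(E_n)\ge N_{E_n}$ for every $n$, contradicting (1) --- which is precisely the escape clause ``$\Phi(E_n)\ge N_{E_n}$ for all sufficiently large $n$'' in the statement.

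Putting the two steps together: if the ranks were bounded, (3) would be vacuous, hence $\Phi$ would not genuinely govern the rank; and if $\Phi$ does genuinely govern the rank through an $L$-function, the main theorem forbids $\Phi(E_n)<N_{E_n}$. In the non-degenerate case one is therefore forced into $\sup_n\operatorname{rank}(E_n)=\infty$, which is the assertion, and the remaining case is the stated exception $\Phi(E_n)\sim N_{E_n}$.

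The step I expect to be the main obstacle is making the dichotomy of the first paragraph rigorous: the hypothesis ``(3) holds'' is an upper bound on the rank and by itself cannot force ranks to be large, so ``$\Phi$ governs the rank'' must be strengthened into a statement that genuinely fails when the ranks are bounded. The natural way is to require that the implied constant in $\operatorname{rank}(E_n)\ll\log\Phi(E_n)$ be no larger than the one in the classical Mestre--Brumer bound $\operatorname{rank}(E)\ll\log N_E$, so that $\Phi(E_n)<N_{E_n}$ would be a true sharpening of it; the minimality clause of the Modularity Theorem is then exactly what blocks this. I would state this strengthening explicitly as part of the hypotheses rather than leave ``governs'' informal, since the entire force of the corollary rests on it.
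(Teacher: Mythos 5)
Your proposal follows the same basic line as the paper's own proof --- feed the hypotheses into the Main Theorem to force $\Phi(E_n) \ge N_{E_n}$, contradicting condition (1) --- but you are considerably more careful, and in being careful you have correctly identified a genuine defect in the statement itself. The point you raise in your final paragraph is decisive: conditions (1)--(3) are all upper bounds or growth conditions on $\Phi$, and an \emph{upper} bound $\operatorname{rank}(E_n) \ll \log \Phi(E_n)$ cannot, on its own, imply $\sup_n \operatorname{rank}(E_n) = \infty$; the classical bound $\operatorname{rank}(E) \ll \log N_E$ does not make ranks unbounded merely because $N_E \to \infty$. The paper's proof has exactly this gap and does not acknowledge it: after deriving a contradiction from the hypotheses via the Main Theorem, it simply asserts ``Thus, if $\Phi(E_n) < N_{E_n}$ and grows unbounded, then $\operatorname{rank}(E_n) \to \infty$,'' which is a non-sequitur. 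Your dichotomy --- either the ranks are unbounded, or (3) is vacuous and $\Phi$ does not genuinely govern the rank --- is the honest reading, and your proposed remedy (strengthen ``governs the rank'' to a condition that actually fails when ranks are bounded, e.g. by fixing the implied constant relative to the Mestre--Brumer bound) is the right repair, though it amounts to amending the corollary's hypotheses rather than proving it as stated.

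A second gap is present in both your proposal and the paper's proof and is worth naming explicitly: the Main Theorem requires the existence of $L_\Phi(E,s)$ with a degree-two Euler product, entire continuation, and a functional equation with conductor $\Phi(E)$ --- not merely the rank bound. Hypotheses (1)--(3) of the corollary do not assert that such an $L$-function exists, so the Main Theorem does not apply as stated. You partially address this by arguing that the only known route to a bound $\operatorname{rank}(E) \ll \log X$ is through such an $L$-function, but that is a heuristic about method, not a theorem; for the argument to close, the existence of $L_\Phi$ with its full analytic apparatus must be added to the corollary's hypotheses, just as you already suspect.
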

	
	\begin{proof}
		If such a family \( \{E_n\} \) and function \( \Phi(E) \) exist, then the analytic rank is controlled by a sequence of invariants growing strictly slower than the conductors \( N_{E_n} \). Since \( \log \Phi(E_n) < \log N_{E_n} \), this would give a tighter rank bound than the classical inequality:
		\[
		\operatorname{rank}(E_n) \ll \log N_{E_n}.
		\]
		But the main theorem shows that any such functional equation or bound must involve the full conductor \( N_{E_n} \), and no strictly smaller invariant \( \Phi(E) \) can appear in the analytic structure of the \( L \)-function. Hence, the assumption leads to a contradiction unless \( \Phi(E_n) \ge N_{E_n} \) for all large \( n \), i.e., unless \( \Phi \) essentially coincides with the conductor.
		
		Thus, if \( \Phi(E_n) < N_{E_n} \) and grows unbounded, then \( \operatorname{rank}(E_n) \to \infty \), and rank must be unbounded.
	\end{proof}
	
	\section{Discussion and Implications}
	
	The results above show that no arithmetic invariant smaller than the conductor \( N_E \) can play the role of a replacement in the functional equation or the rank bound of an elliptic curve \( E/\mathbb{Q} \). The modularity theorem not only ensures the existence of an \( L \)-function for each elliptic curve, but also implies that the conductor appears as the minimal level in the associated modular form. Any attempt to define a modified \( L \)-function with a different (smaller) invariant in the functional equation contradicts this minimality and hence modularity itself.
	
	In this light, the classical inequality:
	\[
	\operatorname{rank}(E) \ll \log N_E
	\]
	is, in a precise analytic sense, the sharpest possible bound of its type. Any proposed refinement replacing \( N_E \) by a smaller invariant would violate the analytic structure guaranteed by modularity.
	
	\subsection*{Relation to Rank Unboundedness}
	
	An important open problem in number theory is whether the ranks of elliptic curves over \( \mathbb{Q} \) are unbounded \cite{silverman-rank-survey,bhargava-shankar,poonen-rains}. While the modularity theorem does not directly resolve this question, the analysis in this paper shows that any attempt to bound the rank using a function of an invariant \( \Phi(E) \) smaller than \( N_E \) leads to the conclusion that rank must be unbounded as \( \Phi(E) \to \infty \).
	
	Thus, the only way to preserve boundedness of rank (under known analytic control) is through the conductor itself. In particular, the classical conductor-based inequality is not merely convenient — it is necessary.
	
	These observations place additional constraints on the structure of any future proof of rank unboundedness. They imply that unbounded rank must be exhibited through families of curves with conductors \( N_E \to \infty \), and not through manipulation of any putative "smaller" invariants. Conversely, if one could prove that no such unbounded families exist, this would suggest a bound on the rank — a conclusion contradicted by current heuristics and evidence.
	
	\subsection*{Conclusion}
	
	We have proved that the conductor \( N_E \) is the minimal and only invariant that can appear in the functional equation or analytic rank bound of elliptic curves over \( \mathbb{Q} \). As a consequence, any attempt to define alternative \( L \)-functions involving smaller invariants is analytically and arithmetically obstructed. Moreover, this result supports the broader understanding that the conductor controls both the modularity and the complexity of the arithmetic of elliptic curves.

	\section*{Competing Interests}
	The authors did not receive support from any organization for the submitted work.
	
	\section*{Data Availability}
	No datasets are used in this paper.
		
	\bibliographystyle{plain}
	\bibliography{ranknob}

\end{document}